\tiny\color{gray},
\newtheorem{theorem}{Theorem}
\newtheorem{lemma}[theorem]{Lemma}
\newtheorem{corollary}[theorem]{Corollary}
  \renewcommand*\env@matrix[1][*\c@MaxMatrixCols c]{%
    \hskip -\arraycolsep
    \let\@ifnextchar\new@ifnextchar
  \array{#1}}
\newcommand{\Z}{\mathbb{Z}}
\newcommand{\F}{\mathbb{F}}
\begin{document}
\author{
  J. G. Coelho\\
  \and F. E. Brochero Mart\'{\i}nez\\
}
\date{August 2024}
\title{Low-weight codewords in cyclic codes}
\maketitle
%%%%%%%%%%%%%%%%%%%%%%%%%%%%%%%%%%%%%%
\begin{abstract}
We introduce a formula for determining the number of codewords of weight 2 in cyclic codes and provide results related to the count of codewords with weight 3. Additionally, we establish a recursive relationship for binary cyclic codes that connects their weight distribution to the number of solutions of associated systems of polynomial equations. This relationship allows for the computation of weight distributions from known solutions of systems of diagonal equations and vice versa, offering a new insight into the structure and properties of binary cyclic codes.

\end{abstract}

\section{Introduction}

A typical feature of codes is to introduce redundancy into a message that will be transferred through a noisy channel, which can corrupt individual bits of information. Typically, the messages are divided into codewords of fixed length and multiplied by the generator matrix to be transformed into vectors of greater length. The introduced redundancy allows some algorithm specific to the type of code to recover the original message even if a few bits of it are corrupted. The minimum distance is a very important parameter, as it is the minimum degree of ``separation" between codewords in the code, and the greater it is, the more errors can be corrected. 

The weight of a codeword in an error-correcting code is the number of non-zero coordinates it has. For linear codes the smallest weight is $0$ for the trivial codeword. The minimum distance
results in the smallest weight different from zero that a codeword has. For a given code of length $n$, we denote by $A_w$ the number of codewords with weight $w$, and call the sequence $A_0, A_1, \dots, A_n$ the weight distribution of the code. Notably, determining the weight distribution of a code also determines its minimum distance.

We will only discuss linear codes, that is, codes where the codewords form a vector subspace of a vector space with a dimension equal to the length of the code. More specifically, we will focus on cyclic codes, with some results being exclusive to binary cyclic codes. Cyclic codes have properties that are useful for encoding, but the way they are defined in terms of polynomials and ideals makes it difficult to determine their weight distribution. There have been many approaches to determine the weight distribution of various families of cyclic codes. Many of these approaches are listed in the surveys \cite{survey, survey 2}. Most of the research done has been directed at determining the distribution of codes with few possible weights. 

A few works have taken another route: rather than finding the complete weight distribution, they consider codes that may have very complicated weight distributions with many possible weights and instead determine the first few values in the weight distribution. Charpin, Tiet\"av\"ainen and Zinoviev have studied codewords with weight $2$ and $3$ in binary cyclic codes \cite{charpin}. Moisio and Ranto studied low-weight codewords in a particular family of binary cyclic codes \cite{kloosterman moisio}. We remark that determining the number of low-weight codewords can be enough to determine the minimum distance of a code if it is low.

In this paper, we will discuss how to compute the number of low-weight codewords in cyclic codes. In the first part we give a few results relating to counting the number of codewords with weights $2$ and $3$ in cyclic codes. Some of these results are more general in nature and do not require the code to be binary, while others do.
In the section with the main results, we present a recursive relation between the weight distribution of binary cyclic codes and the number of solutions of suitable systems of diagonal equations. The last section presents some applications of our main result.

\section{Preliminaries}

Let $\F_p$ be a finite field with prime cardinality $p$, and let $\F_q$ be a finite field where $q = p^m$ for some integer $m$. Let $\gamma$ be a primitive element of $\F_q$. For any element $\gamma^j \in \F_q^*$, we denote its minimal polynomial over $\F_p$ by $g_j(x)$. For indices $t_1, \dots, t_s$ not in the same $p$-cyclotomic coset, we denote by $C_{t_1, \dots, t_s}$ the cyclic code over $\F_p$ with length $n = q - 1$ and generator polynomial $g_{t_1}(x) g_{t_2}(x) \cdots g_{t_s}(x)$. The following result gives us a straightforward way to check if a codeword is in a cyclic code of this form:
\begin{lemma}\label{criterio palavra em codigo ciclico}
    Let $C_{t_1, \dots, t_s}$ be a cyclic code of length $n$ and $\bar{c}$ be a codeword with polynomial form $c(x)$. Let us define the matrix
    \begin{equation}\label{definicao H}
    H := \begin{bmatrix} 1 & \gamma^{t_1} & \gamma^{2 t_1} & \cdots & \gamma^{(n-1) t_1}\\
\vdots & \vdots & \vdots & \ddots & \vdots \\
1 & \gamma^{t_s} & \gamma^{2 t_s} & \cdots & \gamma^{(n-1) t_s}\end{bmatrix}.
\end{equation}
The following are equivalent:
    \begin{enumerate}[(i)]
        \item $\bar{c}$ is in $C_{t_1, \dots, t_s}$.

        \item $c(\gamma^{t_j}) = 0$ for $1 \le j \le s$.

        \item $H \bar{c}^T = 0$.
    \end{enumerate}
\end{lemma}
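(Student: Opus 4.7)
The plan is to prove the equivalences by establishing (ii) $\Leftrightarrow$ (iii) first (essentially bookkeeping) and then (i) $\Leftrightarrow$ (ii) (which uses the theory of minimal polynomials). Since (i) is the definition-level condition and the other two provide computational criteria, a direct pair of equivalences is cleaner than a cycle of implications.

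For (ii) $\Leftrightarrow$ (iii), I would simply expand both sides. Writing $c(x) = \sum_{i=0}^{n-1} c_i x^i$, one has $c(\gamma^{t_j}) = \sum_{i=0}^{n-1} c_i \gamma^{i t_j}$, which by construction of $H$ is precisely the $j$-th coordinate of the column vector $H \bar{c}^T$. Hence the single vector identity $H \bar{c}^T = 0$ is equivalent to the $s$ scalar identities $c(\gamma^{t_j}) = 0$ for $1 \le j \le s$. No extra hypothesis is needed for this step.

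For (i) $\Leftrightarrow$ (ii), the forward direction is immediate: if $\bar{c} \in C_{t_1, \dots, t_s}$, then $c(x)$ is a multiple of $g_{t_1}(x) \cdots g_{t_s}(x)$ in $\F_p[x]$, and since $g_{t_j}(\gamma^{t_j}) = 0$ by definition of the minimal polynomial, we get $c(\gamma^{t_j}) = 0$ for each $j$. For the converse, assume $c(\gamma^{t_j}) = 0$ for every $j$. Because $g_{t_j}(x)$ is the minimal polynomial of $\gamma^{t_j}$ over $\F_p$, this forces $g_{t_j}(x) \mid c(x)$ for every $j$. I would then invoke the hypothesis that the indices $t_1, \dots, t_s$ belong to distinct $p$-cyclotomic cosets modulo $n$: this ensures that the elements $\gamma^{t_j}$ are not Galois conjugates of one another over $\F_p$, so the polynomials $g_{t_1}(x), \dots, g_{t_s}(x)$ are pairwise distinct irreducibles, hence pairwise coprime. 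The product $g_{t_1}(x) \cdots g_{t_s}(x)$ therefore divides $c(x)$, giving $\bar{c} \in C_{t_1, \dots, t_s}$.

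The only delicate point is the last step, namely the translation from ``each $g_{t_j}$ divides $c$'' to ``their product divides $c$'', which rests on the coprimality of distinct minimal polynomials. This is where the cyclotomic-coset hypothesis is actually consumed; without it the $g_{t_j}$ could coincide and the argument would collapse. Everything else is definitional or follows from standard properties of minimal polynomials over finite fields.
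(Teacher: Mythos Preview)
Your proof is correct and follows essentially the same approach as the paper: both establish (i) $\Leftrightarrow$ (ii) via divisibility by the minimal polynomials and (ii) $\Leftrightarrow$ (iii) by identifying $c(\gamma^{t_j})$ with the $j$-th entry of $H\bar{c}^T$. The only difference is that you make explicit the coprimality argument (distinct $p$-cyclotomic cosets $\Rightarrow$ distinct irreducible $g_{t_j}$ $\Rightarrow$ their product divides $c(x)$), whereas the paper compresses this into the single sentence ``a polynomial is divisible by $g(x)$ if and only if it has $\gamma^{t_1},\dots,\gamma^{t_s}$ as roots''.
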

\begin{proof}
    Since the code is generated by $g(x) = g_{t_1}(x) \cdots g_{t_s}(x)$, a codeword is in the code if and only if it is divisible by $g(x)$ in polynomial form. However, a polynomial is divisible by $g(x)$ if and only if it has $\gamma^{t_1}, \dots, \gamma^{t_s}$ as roots, hence (i) and (ii) are equivalent.
    
    The condition $c(\gamma^{t_j}) = 0$ can be rewritten as 
    $$\begin{bmatrix}
        1 & \gamma^{t_j} & \gamma^{2 t_j} & \dots & \gamma^{(n - 1) t_j}
    \end{bmatrix}\bar{c}^T = 0.$$
    Hence, a codeword satisfies $c(\gamma^{t_j}) = 0$ for every $1 \le j \le s$ if and only if $H \bar{c}^T = 0$, proving that (ii) and (iii) are equivalent.

\end{proof}

We notice that the matrix $H$ in (\ref{definicao H}) is not a parity check matrix for the code, since the entries are in $\F_q$ instead of $\F_p$. However, it can be converted into a parity check matrix by fixing a base for $\F_q$ over $\F_p$, turning each row into $m$ rows where each coordinate in the $i$-th row is the coefficient of the corresponding element of the original row with respect to the $i$-th basis element, and then excluding rows until the matrix is linearly independent. Hence, we will also refer to such a matrix as the parity check matrix.

% \subsection{Characters}
\hfill\newline

\section{Weight 2 and 3 codewords}
We will first study low-weight codewords in cyclic codes. Over any finite field, the only cyclic code that has weight $1$ codewords is the code that includes every codeword, and thus there is no need to further investigate this case. Therefore, the question that needs to be answered first is when a cyclic code has codewords with weight $2$. Hence, the first result we will prove is a theorem to determine the number of codewords with weight $2$ in terms of the parameters of a cyclic code. The following theorem provides a necessary and sufficient condition for the minimum distance to be $2$ and determines $A_2$:

\begin{theorem}\label{peso 2} 
Let us consider $q = p^m$ to be a prime power. The code $C_{t_1, \dots, t_s}$ of length $n = q - 1$ has a minimum distance of 2 if and only if the greatest common divisor
\begin{align*}
    D(t_1,\dots, t_s) := \gcd(q - 1, t_1 (p - 1), t_2 - t_1, t_3 - t_1, \dots, t_s - t_1)
\end{align*}
is greater than 1. In that case, the number of codewords with weight 2 is given by
$$A_2 = \frac{(p - 1)(q - 1)(D(t_1, \dots, t_s) - 1)}{2}.$$

If $p = 2$, the expression for $D(t_1, \dots, t_s)$ simplifies to $\gcd(q - 1, t_1, \dots, t_s)$.
\end{theorem}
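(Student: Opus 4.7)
The plan is to parametrize weight-2 codewords and translate the criterion of Lemma \ref{criterio palavra em codigo ciclico} into a divisibility condition modulo $n = q-1$. Every weight-2 codeword has polynomial form $c(x) = ax^i + bx^j$ with $0 \le i < j \le n-1$ and $a, b \in \F_p^*$; conversely, any such quadruple $(i,j,a,b)$ yields a distinct weight-2 word. By Lemma \ref{criterio palavra em codigo ciclico}, $c$ belongs to $C_{t_1, \dots, t_s}$ if and only if $a \gamma^{i t_k} + b \gamma^{j t_k} = 0$ for each $1 \le k \le s$, which setting $d := j - i$ amounts to saying that $\gamma^{d t_k} = -b/a$ takes the same value, and that this value lies in $\F_p^*$.

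Next I would encode these two requirements as congruences modulo $q-1$. Since $\F_p^*$ sits inside $\F_q^*$ as the unique subgroup of order $p-1$, namely the multiples of $\gamma^{(q-1)/(p-1)}$, the condition $\gamma^{d t_1} \in \F_p^*$ is equivalent to $(q-1) \mid d\, t_1 (p-1)$. The compatibility condition $\gamma^{d t_k} = \gamma^{d t_1}$ for $k \ge 2$ is equivalent to $(q-1) \mid d(t_k - t_1)$. The conjunction of all these congruences says exactly that $d$ is a multiple of $(q-1)/D$, where $D = D(t_1, \dots, t_s)$; the standard argument uses that a common multiple of the denominators $\frac{q-1}{\gcd(q-1, \cdot)}$ equals $(q-1)$ divided by the gcd of the integers involved.

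From this I can read off both parts of the statement. A valid $d$ in the range $1 \le d \le n-1$ exists iff $D > 1$, yielding the characterization of minimum distance 2. For the count, the admissible values are $d = r(q-1)/D$ with $1 \le r \le D-1$; for each such $d$ the pairs $(i,j)$ with $j - i = d$ number $n - d$, and for each pair the coefficient $a \in \F_p^*$ is free while $b = -a\gamma^{-d t_1}$ is then forced. Summing $(p-1)(n-d)$ over $d$ gives
\[
A_2 = (p-1) \sum_{r=1}^{D-1} \left( n - \frac{r(q-1)}{D} \right),
\]
and a brief telescoping using $n = q-1$ collapses this to $(p-1)(q-1)(D-1)/2$. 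The binary case follows by noting that $t_1(p-1) = t_1$ and applying $\gcd(a, b-a) = \gcd(a,b)$ iteratively.

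The main obstacle I anticipate is the bookkeeping in the gcd step: one must justify cleanly that the intersection of the arithmetic progressions $\{d : (q-1) \mid d\, e_k\}$, with $e_1 = t_1(p-1)$ and $e_k = t_k - t_1$ for $k \ge 2$, is the progression of multiples of $(q-1)/D$. Everything else is elementary, but care is needed so as not to inadvertently require $-1 \in \F_p^*$ to be treated separately in characteristic $2$; the argument above goes through uniformly since $-1 \in \F_p^*$ in every characteristic.
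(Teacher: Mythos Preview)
Your argument is correct and follows essentially the same reduction as the paper: both translate the parity-check condition into the requirements $\gamma^{d t_1}\in\F_p^*$ and $\gamma^{d t_k}=\gamma^{d t_1}$ for $k\ge 2$, and identify the admissible gaps $d$ as the nonzero multiples of $(q-1)/D$. The only difference is in the bookkeeping of the count: the paper normalizes to words of the form $1+\alpha x^i$, then multiplies by the $(p-1)(q-1)$ shift--scale symmetries and divides by $2$ for the resulting double count, whereas you enumerate directly over $d$ and sum $(p-1)(n-d)$, which is slightly cleaner since it sidesteps the orbit overcount argument (and incidentally, your $\gamma^{d t_k}=-b/a$ should read $-a/b$, though this is immaterial).
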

\begin{proof}
    A codeword of weight 2 is a vector of length $n = q - 1$ over $\F_p$ such that only two coordinates are non-zero. We can shift one of those non-zero values to the first index and multiply by the inverse of this value to make the first coordinate a $1$. Thus, the code has a codeword with weight $2$ if and only if it has a codeword of the form 
    $$c(i, \alpha) = 1 + \alpha x^i,$$
    where $ 1 \le i \le q - 2$ and $\alpha \in \F_p^*$. Multiplying the vector form of this codeword by the matrix $H$ in (\ref{definicao H}), we get that a codeword $c(i, \alpha)$ is in the code if and only if $i$ and $\alpha$ satisfy the system
    $$\begin{cases}
        1 + \alpha \gamma^{i t_1} = 0,\\
        \vdots \\
        1 + \alpha \gamma^{i t_s} = 0,
    \end{cases}$$
    Notice that the values of $\alpha$ and the index $i$ are not independent. It only has a solution if $i$ satisfies
    \begin{equation}\label{sistema dois pesos}
        \begin{cases}
        \gamma^{i t_1} = \dots = \gamma^{i t_s},\\
        \gamma^{i t_1} \in \F_p^*,
    \end{cases}
    \end{equation}
    in which case, $\alpha = - \gamma^{-it_1}$.
    The first equation in (\ref{sistema dois pesos}) is equivalent to
    \begin{equation}\label{indices validos 1}
        i (t_k - t_1) \equiv 0 \pmod{q - 1}
    \end{equation}
    for $2 \le k \le s.$ For each $k$, (\ref{indices validos 1}) has at least one solution if and only if $\gcd( t_k - t_1, q - 1) > 1$, and the indices for which this is true are the multiples of $(q- 1)/ \gcd(t_k - t_1, q - 1)$.

    The second equation in (\ref{sistema dois pesos}) is satisfied if and only if $\gcd(t_1 (p - 1), q - 1) > 1$, which is always true, with the possible exception of when $p  = 2$.  The values of $i$ for which this condition is satisfied are the multiples of $(q - 1)/ \gcd(t_1 (p - 1), q - 1)$. Thus, all the conditions in (\ref{sistema dois pesos}) are satisfied if and only if
    $D(t_1,\dots, t_s) 
    = \gcd(q - 1, t_1 (p - 1), t_2 - t_1, t_3 - t_1, \dots, t_s - t_1) > 1$,
    and the indices $i$ that satisfy the system are the multiples of $(q - 1) / D(t_1, \dots, t_s)$ smaller than $q - 1$, and each one of those will give us a word of the form $c(i;\alpha)$. There are $D(t_1, \dots, t_s)  - 1 $ such multiples. We can then generate every word with weight $2$ in the code by doing $n = q - 1$ shifts and multiplying by the $p - 1$ elements in $\F_p^*$. This will produce each codeword twice, thus
    $$A_2 = \frac{(p - 1) (q - 1) ({D(t_1, \dots, t_s)}  - 1)}{2}.$$

\end{proof}

For instance, let $p = 2$, $q = 2^3$ and consider the binary cyclic code $C_{1, 5}$ of length $n = 7$ over $\F_2$. Since $D(1,5) = 1$, this code has no codewords with weight $2$.

For a case where the minimum distance is $2$, let $p = 3$, $q = 3^2$ and consider the cyclic code $C_{1, 5}$ of length $n = 8$ over $\F_3$. We have
$$D(1,5) = \gcd(9 - 1, 1(3 - 1), 5 - 1) =  2,$$
thus $A_2 = 8$. The codewords with weight $2$ in this code are $(1, 0, 0, 0, 1, 0, 0, 0)$, its shifts and their multiples.

\subsection{Codewords with weight 3}

We will now introduce a sufficient condition for a cyclic code of the form $C_{t_1, \dots, t_s}$ to have codewords with weight $3$. Note that it is not a necessary condition, so it is not an exhaustive characterization of all cyclic codes with a minimum distance less or equal to 3.

Let us denote by $K_g(t)$ the $p$-cyclotomic coset of $t \pmod{p^g - 1}$, i.e.,
$$K_g(t) = \{ t p^k \pmod{p^g - 1} : k = 0 , \dots, g - 1 \}.$$
We say an integer $0 \leq i \leq p^m - 2$ belongs to $K_g(t)$ if there is an integer $0 \leq j \leq g - 1$ such that $i p^j \equiv t \pmod{p^g - 1}.$

% \begin{theorem}\label{peso 3 K_g}
%     Let $t_1, \dots, t_s$ be integers that are not in the same $p$-cyclotomic coset $p\pmod{p^m - 1}$, but are in the same $p$-cyclotomic coset $K_g(t)$ where $t \not\equiv 0 \pmod{p^g - 1}$ and $g$ is a fixed divisor of $m$. Then the cyclic code $C_{t_1, \dots, t_s}$ has a minimum distance $d \leq 3$. Moreover, if $D(t_1,\dots, t_s) = 1$, then the minimum distance is equal to $3$.
% \end{theorem}
% The following result is a generalization of when the code is binary can be seen in . We will prove that the code has a minimum distance $d \le 3$ by constructing codewords of weight $3$ and demonstrating they are in the code.

The following Theorem is a generalization for codes of arbitrary characteristic of the bound presented in \cite[Theorem 1, Theorem 3]{charpin} for binary codes.

\begin{theorem}\label{peso 3 K_g}
    Let $p$ be an odd prime. Let $t_1, \dots, t_s$ be integers that are not in the same coset $K_m(t)$ but are in the same coset $K_g(t)$, where $t \not\equiv 0 \pmod{p^g - 1}$ and $g$ is a fixed divisor of $m$. Then the cyclic code $C_{t_1, \dots, t_s}$ has a minimum distance $d \leq 3$. 

   Moreover, the number $A_3$ of codewords with weight $3$ for $C_{t_1, \dots, t_s}$ satisfies the following bound:
    $$A_3 \ge \frac{(p-1) (p^m - 1)}{6} \left[ (p - 1)^2 (p^g - 2) - (p-2) (3p - 5)\right].$$
\end{theorem}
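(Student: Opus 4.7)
The plan is to exhibit a large family of weight-$3$ codewords via a subfield restriction. Set $N := (p^m-1)/(p^g-1)$ and $\eta := \gamma^N$, so that $\eta$ is a primitive element of $\F_{p^g}$. I would consider words of the form
\[
c(x) = 1 + \alpha\, x^{s_1 N} + \beta\, x^{s_2 N},
\]
with $\alpha, \beta \in \F_p^*$ and distinct $s_1, s_2 \in \{1, \dots, p^g-2\}$; these have weight exactly $3$, since $s_1N$ and $s_2N$ are nonzero distinct residues modulo $n$.

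The first step collapses the $s$ parity-check conditions of Lemma \ref{criterio palavra em codigo ciclico} into a single one. Since each $t_k \in K_g(t)$, there exist $a_k$ with $t_k \equiv t\, p^{a_k} \pmod{p^g - 1}$, and because $\eta$ has order $p^g-1$ one has $\eta^{s_j t_k} = (\eta^{s_j t})^{p^{a_k}}$. As $\alpha, \beta \in \F_p$ are Frobenius-fixed, the $k$-th condition rewrites as $(1 + \alpha\, \eta^{s_1 t} + \beta\, \eta^{s_2 t})^{p^{a_k}} = 0$, so every condition is equivalent to the single equation
\[
1 + \alpha\, \eta^{s_1 t} + \beta\, \eta^{s_2 t} = 0 \quad \text{in } \F_{p^g}.
\]

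Next I would count the admissible tuples. In the principal case $\gcd(t, p^g - 1) = 1$, the map $(s_1, s_2) \mapsto (u, v) := (\eta^{s_1 t}, \eta^{s_2 t})$ bijectively identifies admissible pairs with $(u, v) \in (\F_{p^g}^* \setminus \{1\})^2$ such that $u \neq v$. For each $(\alpha, \beta) \in (\F_p^*)^2$, $v$ determines $u = -(1+\beta v)/\alpha$, so I would count $v \in \F_{p^g}^* \setminus \{1\}$ (contributing $(p-1)^2(p^g - 2)$ in total) and subtract the forbidden $v$'s producing $u \in \{0, 1\}$ or $u = v$: namely $-1/\beta$, $-(1+\alpha)/\beta$, and $-1/(\alpha+\beta)$ (the last only if $\alpha+\beta \neq 0$). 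Each genuinely contributes $1$ unless it collapses to $0$ or $1$, which happens precisely when $\beta = -1$, $\alpha = -1$, or $\alpha+\beta \in \{0,-1\}$, respectively; a brief check confirms the three forbidden values are otherwise pairwise distinct. Summing the corrections,
\[
(p-1)(p-2) + 2(p-2)^2 = (p-2)(3p-5),
\]
so the number of valid tuples equals $(p-1)^2(p^g-2) - (p-2)(3p-5)$.

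Finally I would convert the tuple count to $A_3$. Each weight-$3$ codeword with leading coordinate $1$ corresponds to exactly $2$ such tuples (swap $(r_1,\alpha) \leftrightarrow (r_2,\beta)$), and by the scalar and cyclic-shift symmetries of the code, double counting pairs (codeword, position with value $1$) shows that the number of weight-$3$ codewords with leading coordinate $1$ equals $3 A_3 / ((p^m-1)(p-1))$. Combining,
\[
A_3 \ge \frac{(p-1)(p^m-1)}{6}\bigl[(p-1)^2(p^g-2) - (p-2)(3p-5)\bigr],
\]
and positivity of the bracket yields $d \le 3$. The main obstacle is the bookkeeping in the counting step: one must verify that pairs of the three forbidden $v$-values coincide only when forced into the already-excluded set $\{0,1\}$, so that the naive sum $(p-2)(3p-5)$ is exact rather than merely an upper bound on the correction.
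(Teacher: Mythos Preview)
Your approach is essentially identical to the paper's. Both set $\eta=\gamma^{N}$ with $N=(p^m-1)/(p^g-1)$ (the paper writes $\beta=\gamma^{u}$), build weight-$3$ words supported on exponents divisible by $N$, collapse the $s$ parity checks to the single $\F_{p^g}$-equation $1+\alpha\eta^{s_1t}+\beta\eta^{s_2t}=0$ via Frobenius, and count tuples by fixing three of the four data and solving for the last; your three exclusions $u\in\{0,1,v\}$ are exactly the paper's cases ``$j$ nonexistent'', ``$j=0$'', ``$j=i$'', yielding the same correction $(p-1)(p-2)+2(p-2)^2=(p-2)(3p-5)$, and the final conversion by shifts and scalars with a factor $1/6$ is the same. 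Your explicit restriction to the ``principal case'' $\gcd(t,p^g-1)=1$ corresponds precisely to the paper's tacit use of $t^{-1}\in\Z_{p^g-1}$.
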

\begin{proof}
    Let $\beta = \gamma^u$, where $u = (p^m - 1)/(p^g - 1)$. Consider $a,b \in \F_p^*,\, 1 \le i, j \le p^g - 2$, where $i \ne j$, such that
    \begin{equation}\label{condicao peso 3}
        1 + a\beta^i + b\beta^j = 0.
    \end{equation}
    
    For each such choice of values we can define a weight 3 codeword of the form
    \begin{equation}\label{polinomio construido}
        c(x) = 1 + ax^{u i t^{-1}} + bx^{u j t^{-1}},
    \end{equation}
    where the inverse $t^{-1}$ is calculated in the ring $\Z_{p^g - 1}$. To check if $c(x)$ is a codeword in $C_{t_1, \dots, t_s}$, according to Lemma \ref{criterio palavra em codigo ciclico} we need to verify whether $\gamma^{t_k}$ are roots of the polynomial $c(x)$ for $1 \le k \le s$.  Since $t_k \in K_g(t)$, there are integers $c_1$ and $c_2 \ge 0$ such that $t_k = c_1 (p^g - 1) + p^{c_2} t$. Thus,
    \begin{align*}
        c(\gamma^{t_k}) &= 1 + a\gamma^{u t_k i t^{-1}} + b\gamma^{u t_k j t^{-1}}\\
        &= 1 + a\beta^{t_k i t^{-1}} + b\beta^{t_k j  t^{-1}}\\
        &= 1 + a\beta^{p^{c_2} t i t^{-1}} + b\beta^{p^{c_2} t j t^{-1}}\\
        &= 1 + a\beta^{p^{c_2} i} + b\beta^{p^{c_2}j }\\
        &= (1 + a\beta^{i} + b\beta^{j})^{p^{c_2}}\\
        &= 0.
    \end{align*}
    Hence, $C_{t_1, \dots, t_s}$ has at least this codeword with weight $3$. Thus, counting the number of codewords of this form gives us a lower bound for the number of codewords with weight 3 in the code. Since $\beta$ generates $\F_{p^g}^*$, for each choice of $a,b \in \F_p^*$, $1 \le i \le p^g - 2$, if a corresponding $j$ such that the tuple $(a,b,i,j)$ satisfies the condition \eqref{condicao peso 3} exists, then it is unique and is computed as
    $$\beta^j = b^{-1}(-1 - a\beta^{i}).$$
    Thus, we need to count all triples $(a,b,i)$ such that a valid $j$ satisfying condition \eqref{condicao peso 3} exists, thereby generating a valid weight 3 codeword of the form \eqref{polinomio construido}. 

    We will consider the set
    $$S = \{(a,b,i): a, b \in \F_p^*, 1 \le i,j \le p^g - 1 \}$$
    of all possible tuples and then remove the values that are invalid. There are three possible cases that such a tuple must avoid to generate a valid weight 3 codeword of the form \eqref{polinomio construido}:
    \begin{enumerate}[(I)]
        \item $j$ does not exist;
        \item $j = 0$;
        \item $j = i$.
    \end{enumerate}
    We will determine the subsets of $S$ that satisfy each exception and then remove them:
    \begin{enumerate}[(I)]
    \item Since $j$ is defined uniquely by the equation
    $$\beta^j = b^{-1}(-1 - a\beta^{i}),$$
    it does not exist only if $b^{-1}(-1 - a\beta^{i}) = 0$, i.e., if $\beta^i = -a^{-1}.$ Since that implies $\beta^i \in \F_p^*$, then $i$ must be a multiple of $(p^g - 1)/(p - 1)$. There are $(p - 2)$ choices of $1\le i \le p^g - 2$ that satisfy this condition. Once $i$ is chosen, the value of $a$ is fixed, and the value of $b$ is free in $\F_p^*$, giving us $(p-1)$ choices of $b$. Thus, the set $S_1$ of exceptions of the form (I) has order $(p - 1) (p - 2)$.

    \item This case happens if $b^{-1} (-1 - a \beta^i) = \beta^0 = 1$, i.e., if $(1 + a \beta^i) = -b$. Similarly to the previous case, $i$ can be chosen as any multiple of $(p^g - 1)/(p - 1)$, yielding $(p - 2)$ choices for $i$. Then, $a$ can be chosen as any value in $\F_p^{*}$ different from $- \beta^{-i}$. The value of $b$ is then fixed by these choices. Thus, the size of the set $S_2$ that satisfies the exception (II) is $(p - 2)^2.$

    \item If we require $i = j$ for a solution of \eqref{condicao peso 3}, then we have $1 + (a + b) \beta^i = 0$, i.e., $(a + b) = -\beta^{-i}.$ Similarly to the previous cases, we have $(p -2)$ choices for $i$. Then, we cannot choose $a = -\beta^{-i}$ because that would imply $b = 0$. Hence, we have $(p - 2)$ choices for $a$, and those choices fix the value of $b$. Thus, the set $S_3$ of exceptions of the form (III) has order $(p - 2)^2$.
    \end{enumerate}
    Therefore, the number of tuples $(a,b,i,j)$ that satisfy condition \eqref{condicao peso 3} and generate valid weight 3 codewords is
    \begin{align*}
        |S| - |S_1| - |S_2| - |S_3| &= (p - 1)^2 (p^g - 2) - (p - 1) (p - 2) - 2(p-2)^2\\
        &= (p - 1)^2 (p^g - 2) - (p -2) (3p - 5).
    \end{align*}
    Notice that the tuples $(a,b,i,j)$ and $(b,a,j,i)$ generate the same codeword. Hence, the number of distinct codewords of the form \eqref{polinomio construido} is
    $$\frac{(p - 1)^2 (p^g - 2) - (p -2) (3p - 5)}{2}.$$
    Since these are codewords in a cyclic code, we can generate more codewords by multiplying by constants in $\F_p^*$, and performing up to $(p^m - 1)$ shifts. For each $d \in \F_p^*$ and $0 \le l \le p^m - 2$, a codeword of the form $d x^l c(x) \pmod{ x^{p^m - 1} - 1}$ is also a weight 3 codeword. When $d$ and $l$ run through their respective intervals for all codewords of the form \eqref{polinomio construido}, each codeword is generated three times. Thus, the number $A_3$ of codewords with weight $3$ is greater than or equal to 
    $$\frac{(p-1) (p^m - 1)}{6} \left[ (p - 1)^2 (p^g - 2) - (p-2) (3p - 5)\right].$$
\end{proof}

 We remark that this theorem does not apply to irreducible cyclic codes of the form $C_t$, because they do not satisfy the condition that the indices defining the generator polynomial are not in the same coset $K_m(t)$ but are in the same coset $K_g(t)$ for some $g \mid m$.

 For instance, let $p = 3$, $m = 2$, and consider the cyclic code $C_{1,5}$. The values $1,\,5$ are not in the same $p$-cyclotomic coset $\pmod{3^2 - 1}$, but they are both in $K_1(1)$. Thus, the result applies in this case for $g = 1$. Therefore
 $$A_3 \ge \frac{(3 - 1) (3^2 -1)}{6} \left[(3-1)^2 (3^1 - 2) - (3 - 2) (3^2 - 5) 
 \right] = \frac{(3 - 1) (3^2 -1)}{6} \left[ 4 - 4 \right] = 0.$$
 This is exactly as expected, since it can be verified that $A_3 =0$ for this code.
\newline \hfill

%Let $I(t)$ be the set of all integers $1 \le i \le q - 2$ such that $i \in K_g(t)$. The set $I(t)$ is an union of cosets $\mod{q - 1}$. Let us define $J(t)$ as a set of representatives of these cosets and $C_{J(r)}$

% \begin{corollary}
%      Let $t_1, \dots, t_s$ be integers that are not in the same $p$-cyclotomic coset $\mod{p^m - 1}$, but are in the same $p$-cyclotomic coset $K_g(t)$ for some $g$ divisor of $m$.  Then the number $A_3$ of codewords with weight $3$ for $C_{t_1, \dots, t_s}$ satisfies the following bound:
%      $$A_3 \ge (p - 1)(q - 1) (p^g - 2) /6.$$
% \end{corollary}
% \begin{proof}
%     Let $\beta$ and $u$ be as in Theorem \ref{peso 3 K_g}. For each integer $1 \le a \le p^g - 2$, there is exactly one $1 \le b \le p^g - 2$ such that $1 + \beta^{a} + \beta^{b} = 0$. Through analogous steps, we can show that for each pair $(a, b)$, the codeword $1 + x^{ua t^{-1}} + x^{u b t^{-1}}$ is in the code. However, notice that the pairs $(a,b)$ and $(b,a)$ generate the same codeword, thus this only gives us $(p^g - 1)/2$ codewords. By counting the codewords obtained from shifts and multiplications by constants of those codewords, we obtain the lower bound. The binary case has been proven in \cite[Theorem 4]{charpin}.
% \end{proof}

For another example, consider $p = 5$, $m = 2$, and $t_1, t_2 = 1,\, 9$. The values $1$ and $9$ are not in the same $5$-cyclotomic coset $\pmod{5^2 - 1}$, but are both $K_1(1)$. Thus,
$$A_3 \ge \frac{(5-1) (5^2 - 1)}{6} \left[ (5 - 1)^2 (5^1 - 2) - (5-2) (3\cdot 5 - 5)\right]= 288.$$
Remarkably, it can be verified that $A_3 = 288$ so the lower bound is attained.
We can repeat the procedure used in the proof to construct a codeword with weight $3$. We have $u = (25 - 1)/(5 - 1) = 6$ and can verify that $\beta = \gamma^6$ satisfies the equation
$$1 + 3 \beta + \beta^3 = 0,$$
thus $c(x) = 1 + 3x^6 + x^{18}$ is a codeword with weight $3$ in $C_{1,9}$.

\hfill\newline

We will now introduce a class of cyclic codes for which there is an interesting criterion for the existence of weight $3$ codewords.

%\textcolor{red}{A forma de contar palavras de peso 3 pode ser encontrada em}

Let us consider binary cyclic codes $C_{1,t}$ of length $n = q - 1 = 2^m - 1$ and generator polynomial $g(x) = g_1(x) g_t(x)$. From Theorem \ref{peso 2} we know that this code cannot have codewords with weight 2. So the next step in determining the existence of low-weight codewords is to determine if the minimum distance is $3$. The parity check matrix for this polynomial is
\begin{equation}\label{H para C1t}
    H = \begin{bmatrix} 1 & \gamma^{1} & \gamma^{2} & \cdots & \gamma^{(n-1)}\\
1 & \gamma^{t} & \gamma^{2 t} & \cdots & \gamma^{(n-1) t}\end{bmatrix}.
\end{equation}
%and by Theorem \ref{teorema numero pesos generico} it has a codeword with weight $3$ if and only if the system
Since the code is cyclic, a codeword can be shifted so the coordinates with ones are $0,i,j$ where $0 < i < j \le q- 2$. Multiplying this codeword by $H$, we have that the codeword is in the code if and only if
$$
\begin{cases}
    1 + \gamma^{i} + \gamma^{j} = 0,\\
    1 + \gamma^{it} + \gamma^{jt} = 0.
\end{cases}$$
Since the indices run from $1$ to $q - 2$, there are indices $i, j$ that construct a weight $3$ codeword if and only if there is a solution $(x, y)$ with $x, y \in \F_q\setminus\{ 0, 1\}$ to the system
$$\begin{cases}
    1 + x + y = 0,\\
    1 + x^t + y^t = 0.
\end{cases}$$
 We can isolate $y = x + 1$ and substitute it into the second equation to obtain that the condition is that the polynomial
$$U_t (x) := 1 + x^t + ( 1 + x)^t$$
should have a root in $\F_q \setminus \{0, 1 \}$. 

\begin{theorem}\label{teorema fatorar peso 3}
    Let $C_{1,t}$ be the binary cyclic code with minimal polynomial $g(x) =$ $ g_1(x) $ $g_t(x)$, and let us suppose that the irreducible factors of
    $$U_t(x) = 1 + x^t + (1 + x)^t$$
    are $x, (x+1), f_1(x), \dots, f_l(x)$. Then the code has a codeword with weight $3$ if and only if at least one of the irreducible factors $f_1, \dots, f_l$ has a root in $\F_q$.

    Specifically, if $m_j := \deg{f_j} (x)$ for $1 \le j \le l$, then
    \begin{enumerate}[i)]
        \item If $m_j \nmid m$ for all $1 \le j \le l$, then the code has no codewords with weight $3$.

        \item If $m_j \mid m$ for some $1 \le j \le l$, then the code has codewords with weight $3$.
    \end{enumerate}

\end{theorem}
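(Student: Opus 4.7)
The plan is to build directly on the characterization established immediately before the theorem: $C_{1,t}$ contains a codeword of weight $3$ iff the polynomial $U_t(x) = 1 + x^t + (1+x)^t$ has a root in $\F_q \setminus \{0,1\}$. From there, the proof becomes a translation of this existence condition into the language of the irreducible factorization of $U_t$ over $\F_2$.

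First, I would check that $x$ and $x+1$ are indeed always irreducible factors of $U_t$. Substituting $x=0$ gives $U_t(0) = 1 + 0^t + 1^t = 0$ in $\F_2$, and $x=1$ gives $U_t(1) = 1 + 1 + 2^t = 0$ in $\F_2$. Hence one may write $U_t = x\cdot(x+1)\cdot f_1\cdots f_l$ (up to multiplicities, which are irrelevant for counting roots), so the roots of $U_t$ in $\overline{\F_2}$ decompose as $\{0,1\}$ together with the union of the root sets of the $f_j$. Since each $f_j$ is coprime to $x(x+1)$, none of its roots can be $0$ or $1$. It follows that $U_t$ admits a root in $\F_q\setminus\{0,1\}$ iff some $f_j$ has a root in $\F_q$.

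To finish, I would invoke the classical subfield correspondence for finite fields: an irreducible polynomial $f_j\in\F_2[x]$ of degree $m_j$ has splitting field $\F_{2^{m_j}}$, and $\F_{2^{m_j}}\subseteq\F_q=\F_{2^m}$ iff $m_j\mid m$. Therefore some $f_j$ has a root in $\F_q$ iff some $m_j$ divides $m$, which yields the dichotomy (i) and (ii). I do not foresee any serious obstacle: the argument is a straightforward translation between factorizations and subfields, and the only slightly delicate point is verifying that $x$ and $x+1$ really do appear in the factorization of $U_t$ and that the remaining $f_j$ cannot accidentally vanish at $0$ or $1$, both of which follow from the one-line evaluation and coprimality in the factorization.
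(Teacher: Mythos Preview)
Your proposal is correct and follows essentially the same route as the paper: reduce to the pre-theorem characterization that a weight-$3$ codeword exists iff $U_t$ has a root in $\F_q\setminus\{0,1\}$, observe such roots can only come from the $f_j$, and then invoke the subfield criterion $\F_{2^{m_j}}\subseteq\F_{2^m}\iff m_j\mid m$. If anything, your write-up is slightly more explicit than the paper's, since you actually verify $U_t(0)=U_t(1)=0$ and note the coprimality of the $f_j$ with $x(x+1)$, points the paper takes for granted.
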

\begin{proof}
    The roots of $U_t(x)$ in $\F_q \setminus\{ 0, 1\}$, if they exist, must come from the irreducible factors other than $x$ and $x + 1$.
    % \textcolor{red}{
    % We notice that the roots of a irreducible polynomial in $\F_2[x]$ must be in the extension of degree equal to the degree of the polynomial, and the intersection of $\F_q$ and $\F_{2^{m_j}}$ is
    % $\F_{2^{m_j}}$ where $m_j = \deg{f_j(x)}.$ Thus, if the $m_j = 1$ for $1 \le j \le l$ then there are no roots in $\F_q \setminus \{ 0 , 1\}$ and the code has no codewords with weight $3$.}
    The roots of an irreducible polynomial in $\F_2[x]$ must be in the extension field of degree equal to the degree of the polynomial. Thus, if $m_j = \deg{f_j(x)}$, the roots are in $\F_q$ if and only if $\F_{2^{m_j}} \subseteq \F_{q}$, which occurs if and only if $m_j \mid m$. Therefore, if no irreducible polynomial has a degree dividing $m$, then there are no codewords with weight $3$ in the code $C_{1,t}$. Conversely, if $\deg{f_j}(x) \mid m$ for some $1 \le j \le l$ then the code has a codeword with weight $3$.
\end{proof}

For instance, no matter the integer $m>2$ such that $q = 2^m$, the code $C_{1,3}$ with length $n = q - 1$ must have a minimum distance greater than $3$, since $U_3(x) = x^2 + x$ only has $0$ and $1$ as roots.
% We remark that this theorem doesn't tell us about the existence or nonexistence of codewords with weight $3$ in a code when the $m_j \nmid m$ for every $j$ but not all of them are $1$.

\begin{corollary}\label{corolario fatorar peso 3}
    Let $C_{1,t}$ be the binary cyclic code with minimal polynomial $g(x) =$ $ g_1(x) $ $g_t(x)$. Then $C_{1,t}$ has a minimum distance equal to $3$ if and only if
    \begin{equation*}
        \gcd(U_t(x),\, x^q + x) \ne x (x + 1).
    \end{equation*}
\end{corollary}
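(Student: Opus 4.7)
The plan is to combine the factorization criterion from Theorem \ref{teorema fatorar peso 3} with the standard characterization of $\F_q$ as the splitting set of $x^q+x$ in characteristic $2$. First I would observe that by Theorem \ref{peso 2}, since $p=2$ and the tuple $(1,t)$ contains a $1$, we have $D(1,t)=\gcd(q-1,1,t)=1$, so $C_{1,t}$ has no codewords of weight $2$. Consequently, the minimum distance is always at least $3$, and the statement reduces to showing that a weight-$3$ codeword exists in $C_{1,t}$ if and only if $\gcd(U_t(x),\,x^q+x)\neq x(x+1)$.

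Next I would exploit the identity $x^q+x=\prod_{\alpha\in\F_q}(x+\alpha)$, valid in characteristic $2$. This gives
\[
\gcd\bigl(U_t(x),\,x^q+x\bigr)\;=\;\prod_{\substack{\alpha\in\F_q\\ U_t(\alpha)=0}}(x+\alpha),
\]
i.e., the gcd is the product of exactly those irreducible factors of $U_t(x)$ whose roots lie in $\F_q$ (equivalently, those whose degree divides $m$). A direct check shows $U_t(0)=1+0+1=0$ and $U_t(1)=1+1+0=0$, so $x$ and $x+1$ are always factors of $U_t(x)$ with roots in $\F_q$; hence $x(x+1)$ always divides the gcd.

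Finally, I would invoke Theorem \ref{teorema fatorar peso 3}: $C_{1,t}$ has a weight-$3$ codeword if and only if some irreducible factor $f_j$ of $U_t(x)$ other than $x$ and $x+1$ has degree dividing $m$, i.e., has a root in $\F_q$. By the gcd description above, this is precisely the condition that $\gcd(U_t(x),\,x^q+x)$ strictly exceeds $x(x+1)$. Combining this with the first paragraph's reduction yields the corollary.

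There is no real obstacle here; the entire argument is a repackaging of Theorem \ref{teorema fatorar peso 3} once one notes the roles of $x^q+x$ as the vanishing polynomial of $\F_q$ and of Theorem \ref{peso 2} in ruling out weight $2$. The only mild subtlety is verifying that $x$ and $x+1$ are genuinely factors of $U_t(x)$ (so that the trivial lower bound $x(x+1)\mid\gcd$ is correct), which is an immediate substitution.
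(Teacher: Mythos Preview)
Your proof is correct and follows essentially the same route as the paper: both arguments reduce to Theorem~\ref{teorema fatorar peso 3} by observing that the irreducible factors of $U_t(x)$ with a root in $\F_q$ are exactly those dividing $x^q+x$, and that $x(x+1)$ always divides the gcd. Your version is slightly more explicit in invoking Theorem~\ref{peso 2} to dispose of weight~$2$ and in computing $U_t(0)=U_t(1)=0$, whereas the paper additionally remarks that $x$ and $x+1$ occur in $U_t(x)$ with multiplicity one (a point you sidestep, legitimately, since $x^q+x$ is separable and hence the gcd is automatically squarefree).
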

\begin{proof}
    The factors $x, \, x+1$ are present in both $U_t(x)$ and $x^q + x$. Any irreducible factor in $U_t(x)$ has degree dividing $m$ if and only if its roots are in $\F_q$, which occurs if and only if it divides $x^q  + x$. It is straightforward to verify that $x^2 + x$ divides both $U_t(x)$ and $x^q + x$, and that the multiplicity on $U_t(x)$ is $1$. Thus, there are other irreducible factors if and only if $\gcd(U_t(x),\, x^q + x) \ne x (x + 1).$
\end{proof}

The factoring of polynomials over finite fields is often challenging, with few cases where it can be done without exhaustive search. The following corollary presents a case where the minimum distance greater than $3$ can be easily verified.

% \begin{corollary}
%     If $m$ is prime and $t < m+ 3$ the code $C_{1,t}$ has minimum distance $d > 3$.
% \end{corollary}
% \begin{proof}
%     It is a direct consequence of item (i) in Theorem \ref{teorema fatorar peso 3}. See \cite[Proposition 3]{charpin}.%case where this can surely be verified is if $m$ is prime and $t < m + 3$, so that $U_t(x)/(x ( x + 1))$ has degree equal or smaller to $t - 3 < m$ and cannot have a irreducible polynomial with degree divisible by $m$.
% \end{proof}

\begin{corollary}\label{caso facil sem peso 3}
    Let $m = p_1^{e_1} \cdots p_s^{e_s}$ be the decomposition of $m$ into prime factors. If $t < p_i+ 3$ for all $1 \le i \le s$, then the code $C_{1,t}$ has minimum distance $d > 3$.
\end{corollary}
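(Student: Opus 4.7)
My plan is to apply Theorem \ref{teorema fatorar peso 3}: it suffices to bound the degrees of the non-trivial irreducible factors $f_1,\dots,f_l$ of $U_t(x) = 1 + x^t + (1+x)^t$ and show that none of the degrees $m_j = \deg f_j$ divides $m$. Weight-$2$ codewords are ruled out immediately by Theorem \ref{peso 2}, because $D(1,t) = \gcd(2^m-1,\,1,\,t-1) = 1$, so minimum distance greater than $3$ follows as soon as we rule out weight-$3$ codewords.

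The first step is to reduce to the case $t$ odd. Since the definition of $C_{1,t}$ requires $1$ and $t$ to lie in distinct $2$-cyclotomic cosets, $t$ is not a power of $2$, so writing $t = 2^a u$ with $u$ odd we have $u \ge 3$. In characteristic $2$, the Frobenius endomorphism gives
$$U_t(x) \;=\; \bigl(1 + x^u + (1+x)^u\bigr)^{2^a} \;=\; U_u(x)^{2^a},$$
so $U_t$ and $U_u$ have the same distinct irreducible factors.

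The second step is a degree bound. Because $u$ is odd, the coefficient $\binom{u}{u-1}=u$ of $x^{u-1}$ in $U_u$ is odd while the $x^u$ terms cancel, giving $\deg U_u = u - 1$. A quick derivative check ($U_u'(0) = U_u'(1) = 1$) shows that $x$ and $x+1$ each appear in $U_u$ with multiplicity exactly one, so every non-trivial irreducible factor of $U_t$ lies in the quotient $U_u(x)/(x(x+1))$, which has degree $u - 3$. Therefore $m_j \le u - 3 \le t - 3$ for every $j$, and, since $f_j \notin \{x, x+1\}$ and these are the only linear irreducibles over $\F_2$, one has $m_j \ge 2$.

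Finally, let $p = \min_i p_i$ be the smallest prime factor of $m$. The hypothesis $t < p_i + 3$ for all $i$ forces $t \le p + 2$, so $2 \le m_j \le p - 1$. Any divisor of $m$ greater than $1$ is at least $p$, so no $m_j$ can divide $m$, and Theorem \ref{teorema fatorar peso 3} yields the conclusion. The main obstacle I anticipate is the Frobenius reduction $U_t = U_u^{2^a}$: without it, even values of $t$ would require a separate (and more delicate) bookkeeping of which coefficients $\binom{t}{k}$ are odd, whereas with it the entire argument becomes an elementary degree count followed by a comparison with the smallest prime divisor of $m$.
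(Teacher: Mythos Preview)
Your argument is correct and follows the same approach as the paper: bound the degrees $m_j$ of the non-trivial irreducible factors of $U_t$ by $t-3$ and observe that then no $m_j$ can be divisible by any prime factor of $m$, so by Theorem~\ref{teorema fatorar peso 3} no weight-$3$ codeword exists. Your Frobenius reduction $U_t = U_u^{2^a}$ for even $t$ and the explicit checks that $m_j\ge 2$ and $A_2=0$ are refinements that the paper leaves implicit (the paper simply asserts $\deg\bigl(U_t/(x(x+1))\bigr)=t-3$, which is literally correct only for odd $t$), but the core idea is identical.
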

\begin{proof}
    Let us suppose that the irreducible factors of
    $$U_t(x) = 1 + x^t + (1 + x)^t$$
    are $x, (x+1), f_1(x), \dots, f_l(x)$, and denote $m_j = \deg{f_j}$ for all $1 \le j \le l$. Theorem \ref{teorema fatorar peso 3} tells us that the code has a minimum distance of $3$ if and only if $m_j \mid m$ for at least one $1 \le j \le l$. This can only happen if at least one of the primes $p_i$ divides $m_j$. Since the degree of $U_t(x)/ (x (x+1))$ is $t - 3$, the condition that $p_i > t - 3$ for every $1 \le i \le s$ implies that $p_i$ is greater than $m_j$ for every $1 \le j \le l$, since $t - 3 \ge m_j$. This implies that no prime factor of $m$ divides any of the degrees of the irreducible factors, and thus no degree divides $m$. Therefore, there are no codewords with weight $3$.
    % It is a direct consequence of item (i) in Theorem \ref{teorema fatorar peso 3}. See \cite[Proposition 3]{charpin}.%case where this can surely be verified is if $m$ is prime and $t < m + 3$, so that $U_t(x)/(x ( x + 1))$ has degree equal or smaller to $t - 3 < m$ and cannot have a irreducible polynomial with degree divisible by $m$.
\end{proof}
For instance, let $q = 2^{31}$ and $t = 33$. Since $t < 31 + 3 = 34$, Corollary \ref{caso facil sem peso 3} implies that there are no codewords with weight $3$ in the code.

We remark that Corollary \ref{caso facil sem peso 3} is a generalization of a similar result where $m$ is a prime number, which can be seen in \cite[Proposition 3]{charpin}.

Thus the number of codewords of weight $3$ in this polynomial is related to the number of roots of some irreducible polynomials in $\F_2[x]$. To count the number of codewords of weight $3$ from the number of roots in $\F_q\setminus \{0, 1\}$ of $U_t(x)$, observe that each root $\gamma^i$ corresponds to another root $\gamma^j$ such that the codeword
$$c(x; i, j) = 1 + x^i + x^j$$
is in the code. Notice that we can swap $\gamma^i$ and $\gamma^j$ and still obtain the same codeword, so the number of codewords of the form $c(x;i, j)$ in the code is half the number of roots of $U_t(x)/(x (x+1))$ in $\F_q$. By shifting these codewords $n = q - 1$ times, we will count every codeword three times. Thus, the total number of codewords of weight $3$ is given by the following corollary:

% \begin{theorem}
%     Let $V_t:= \{x \in \F_q\setminus \{0, 1\}: U_t(x) = 0\}$. The number of codewords with weight $3$ in the code is
%     $$A_3 = \frac{q - 1}{6} \cdot | V_t|.$$

%     Moreover, if the $U_t(x)$ can be factored into irreducible polynomials $x, (x+1), f_1(x),$ $\dots,
%     $ $f_l(x)$ where $\deg(f_j) \mid m$ for $1 \le j \le l$, then the number of codewords of weight $3$ in the code is
%     $$A_3 = \frac{q - 1}{6} \sum_{j=1}^l \deg(f_j).$$
% \end{theorem}

\begin{corollary}\label{caso facil peso 3}
    The number of codewords with weight $3$ in $C_{1,t}$ is
    $$A_3 = \frac{(q - 1)  \left(\deg(\gcd(U_t(x), x^q + x)) - 2\right)}{6}.$$

    % Moreover, if the $U_t(x)$ can be factored into irreducible polynomials $x, (x+1), f_1(x),$ $\dots,
    % $ $f_l(x)$ where $\deg(f_j) \mid m$ for $1 \le j \le l$, then the number of codewords of weight $3$ in the code is
    % $$A_3 = \frac{q - 1}{6} \sum_{j=1}^l \deg(f_j).$$
\end{corollary}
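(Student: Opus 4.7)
The plan is to convert the counting outlined in the paragraph preceding the statement into a closed-form expression by re-expressing the number of roots of $U_t(x)/(x(x+1))$ in $\F_q$ as the degree of a gcd. The combinatorial pieces---how pairs of roots correspond to codewords, and how cyclic shifting inflates the count---are already in place; what remains is the algebraic identification of the root count and the multiplicity bookkeeping.

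I would proceed in three steps. First, identify $\deg \gcd(U_t(x), x^q + x)$ with the number of distinct roots of $U_t(x)$ lying in $\F_q$. This follows from $x^q + x = x^q - x = \prod_{\alpha \in \F_q}(x - \alpha)$ in characteristic $2$, which is squarefree with root set $\F_q$; hence the gcd equals $\prod_{\alpha \in \F_q,\, U_t(\alpha) = 0}(x - \alpha)$, and its degree is exactly the count of distinct $\F_q$-roots of $U_t$. Second, remove the forced roots $0$ and $1$: Corollary \ref{corolario fatorar peso 3} records that $x(x+1)$ divides $U_t(x)$ with multiplicity exactly $1$, so precisely two of the roots above lie in $\{0, 1\}$, leaving $\deg\gcd(U_t(x), x^q + x) - 2$ roots in $\F_q \setminus \{0, 1\}$. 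Third, translate roots into codewords: as derived just before Theorem \ref{teorema fatorar peso 3}, each root $\alpha \in \F_q \setminus \{0, 1\}$ produces the weight-$3$ codeword $1 + x^i + x^j$ with $\gamma^i = \alpha$ and $\gamma^j = \alpha + 1$. Since $U_t(x) = U_t(x + 1)$, the roots come in pairs $\{\alpha, \alpha + 1\}$, and both members of a pair give the same unordered codeword; hence the number of weight-$3$ codewords of the form $1 + x^i + x^j$ in $C_{1,t}$ is $(\deg\gcd(U_t(x), x^q + x) - 2)/2$.

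Finally, applying the $n = q - 1$ cyclic shifts to each such codeword produces a list of weight-$3$ codewords of $C_{1,t}$, and any weight-$3$ codeword with support $\{a_1, a_2, a_3\}$ appears in this list exactly three times---once for each choice of which support coordinate is rotated into position $0$. Dividing by $3$ yields
$$A_3 = \frac{(q - 1)(\deg\gcd(U_t(x), x^q + x) - 2)}{6}.$$
The step most deserving of care is the multiplicity bookkeeping: confirming that $x$ and $x+1$ are always simple roots of $U_t(x)$ (so that subtracting $2$ is exact) and that $x^q - x$ is squarefree (so that the gcd truly counts distinct roots). Once these are fixed, the three-to-one shift overcounting is uniform---even for supports with nontrivial rotational symmetry, since there are still exactly three choices of anchor in $\{a_1, a_2, a_3\}$---and the formula follows by arithmetic.
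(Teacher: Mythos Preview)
Your proposal is correct and follows essentially the same approach as the paper: both identify the degree of $\gcd(U_t(x), x^q+x)$ with the number of $\F_q$-roots of $U_t$, subtract $2$ for the trivial roots $0$ and $1$, halve to pass from roots to anchored codewords $1+x^i+x^j$, and then multiply by $(q-1)/3$ to account for cyclic shifts with three-to-one overcounting. Your write-up is more explicit than the paper's two-sentence proof---in particular your observation that $U_t(x)=U_t(x+1)$ cleanly explains the root pairing, and your use of the squarefreeness of $x^q+x$ makes the gcd-degree identification rigorous---but the underlying argument is the same.
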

\begin{proof}
    According to Corollary \ref{corolario fatorar peso 3}, every root of $U_t(x)$ that is in $\F_q$ is in $\gcd(U_t(x), x^q + x)$. By the discussion above we know that the total number of codewords is $(q - 1)/6$ times the total number of roots excluding $0$ and $1$.
\end{proof}

For instance, let $q = 2^4$, $t = 7$, and consider the binary cyclic code $C_{1,7}$ over $\F_q$, with the primitive element $\gamma$ satisfying $\gamma^4 + x + 1 = 0$. Since
$\gcd(U_7(x), x^{16} + x) = x^4 + x$, the Corollary (\ref{caso facil peso 3}) implies the code $C_{1,7}$ of length $n = 15$ has
$$A_3 = \frac{(16 - 1) \cdot (4 - 2)}{6} = 5$$
codewords with weight $3$. In fact, the roots of $U_7(x)$ in $\F_q\setminus\{0, 1\}$ are $\gamma^5$ and $\gamma^{10}$, which correspond to the codeword
$$(1, 0, 0, 0, 0, 1, 0, 0, 0, 0, 1, 0, 0, 0, 0),$$
and the codewords obtained by shifting it, totaling $5$ such codewords.

\section{Main results}
For here on we will regard only binary cyclic codes. Let $q = 2^m$, $\gamma$ a primitive element of $\F_q$, $C_{t_1,\dots, t_s}$ be the code of length $n = q - 1$ generated by the polynomial $g(x) = g_{t_1}(x) g_{t_2}(x)\cdots g_{t_s}(x)$ as defined previously, and let $A_0, A_1, A_2, \dots$ be its weight distribution. We will show a way to represent $A_w$, where $w \ge 1$, as the number of solutions of a system with some restrictions and show how it relates to $N_w$, the number of solutions in $\F_q$ of the system
\begin{equation}\label{N_w definicao}
\begin{cases}
    x_1^{t_1} + \cdots+ x_w^{t_1} = 0,\\
    \vdots\\
    x_1^{t_s} + \cdots+ x_w^{t_s} = 0.\\
\end{cases}
\end{equation}

In polynomial notation, codewords with weight $w$ are of the form
$$c(i_1, \dots, i_w) := x^{i_1} + \cdots + x^{i_w},$$
where $0 \le i_1 < i_2 < \dots < i_w \le q - 2$ are ordered indices. Multiplying any of these codewords by the parity check matrix (\ref{criterio palavra em codigo ciclico}), we conclude that the codeword is in $C_{t_1, \dots, t_s}$ if and only if
$$
\begin{cases}
    \gamma^{i_1 t_1} + \cdots+ \gamma^{i_{w} t_1} = 0,\\
    \vdots\\
    \gamma^{i_1 t_s} + \cdots+ \gamma^{i_{w} t_s} = 0.\\
\end{cases}$$
For a fixed primitive element $\gamma \in \F_q$, we define the discrete logarithm function $\log_\gamma(x): \F_q \rightarrow \Z_{\ge 0}$ as the function that gives the smallest non-negative integer such that $\gamma^{\log_\gamma(x)} = x$. Since $\gamma$ is a primitive element and the indices run from $0$ to $q - 2$, each $\gamma^{i_j}$ corresponds to a $x_j \in \F_q^*$. Thus, $A_w$ is the number of solutions over $\F_q^*$ of the system (\ref{N_w definicao}), where the indices are ordered by their discrete logarithms, i.e, $\log_{\gamma} x_1 < \log_{\gamma} x_2 <  \dots < \log_{\gamma} x_w.$ We remark that these solutions have no repetition of variables, unlike $N_w$, where the solutions can include zeros, be unordered, and may have repetitions. 

We will introduce notation for orderings and repetitions within tuples. A tuple with elements in $\F_q$ is said to be ordered if its coordinates are arranged in ascending order according to the discrete logarithm with respect to $\gamma$, with $0 \in \F_q$ placed in the first position if present. Ordered tuples with integer elements are ordered in the usual ascending order. A subtuple of an ordered tuple is a tuple formed by selecting a subset of the elements from the original tuple while preserving their relative order. For instance, $(2, 4)$ is a subtuple of $(1,2,3,4)$.

We denote the sum of coordinates in a tuple $v$ as $|v|$, and the number of coordinates as $\#v$. We call a tuple $w$-sum if the sum of its coordinates is $w$. For instance, for $v = (2,2,4,4,4,8)$ we have $|v| = 24$ and $\#v = 6$.

We introduce a notion for repetition within tuples, which we will call the partition of the tuple. Let $v = (v_1, \dots, v_w)$ be a tuple, and let $k$ denote the number of distinct values appearing in its coordinates. The partition $P(v)$ is defined as the ordered integer tuple $P(v) = (a_1, \dots, a_{k}),$
where each $a_i \in \Z_{>0}$ represents the number of repetitions of one of the distinct values. For instance, the partition for the tuple $v = (4,3,5,4,3,0,4)$ is $P(v) = (1,1,2,3)$, since $0$ and $5$ appear once each, $3$ repeats twice and $4$ repeats three times.

%. Also, $r(v) = 3$ and $s(v) = 1$ since only one coordinate of the partition $T(v)$ is even.

We will also refer to ordered tuples with positive integer coordinates as partitions, since they can be obtained as partitions of other tuples. For a partition tuple $v$, we define $\varepsilon(v)$ and $\theta(v)$ as the subtuples of $v$ consisting only of the even and odd coordinates in $v$, respectively.

For a positive integer $w$, its assembly $S(w)$ is defined as the set of all ordered positive $w$-sum partitions, i.e.,
$$S(w) = \left\{(v_1, \dots, v_k):  1 \le v_1 \le \dots \le v_k \le n ; \sum_{i = 1}^k v_i = w \right\}.$$
For instance, $S(4) = \{ (1,1,1,1), (1,1,2), (1,3), (2, 2), (4)\}.$

For a tuple $v = (a_1, \dots, a_{k})$ with sum $|v|= w$, we define its factorial as $v! = a_1! \cdots a_k!$, and its multinomial as
$$\binom{w}{v} := \binom{w}{a_1, a_2, \dots, a_k} = \frac{w!}{a_1! a_2! \cdots a_k!}.$$

We can now state our main result. The following theorem relates the weight distribution of a cyclic code to the number of solutions of a system of diagonal equations.

\begin{theorem}\label{teo relacao N_w A_w}
    Consider the binary cyclic code $C_{t_1, \dots, t_s}$ of length $n = q - 1$ with generator polynomial $g(x) = g_{t_1}(x) \cdots g_{t_s}(x)$. Let $A_0, A_1, A_2, \dots$ be the weight distribution of the code and, for any positive integer $w$, let $N_w$ be defined as in (\ref{N_w definicao}). The following relationship between $N_w$ and the weight distribution of $C_{t_1, \dots, t_s}$ holds:
    \begin{equation}\label{eq teo relacao}
        N_w = \sum_{v \in S(w)}  (A_{\#\theta(v)} + A_{\#\theta(v) - 1})  \frac{\#\theta(v)!}{P(\theta(v))!} \cdot  \binom{q - \#\theta(v)}{\#\varepsilon(v)} \frac{\#\varepsilon(v)!}{P(\varepsilon(v))!}\cdot  \binom{w}{v}.
    \end{equation}
    
\end{theorem}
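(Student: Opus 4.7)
The plan is to organize the solutions $(x_1,\ldots,x_w)\in\F_q^w$ of the system defining $N_w$ by their \emph{partition type} $v\in S(w)$, and use characteristic $2$ to reduce the vanishing condition to one involving only the values with odd multiplicity. The whole identity then falls out of a multinomial bookkeeping argument.

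\textbf{Step 1 (characteristic-$2$ reduction).} Given a tuple $(x_1,\ldots,x_w)$, collect the distinct values $y_1,\ldots,y_k$ with multiplicities $n_1,\ldots,n_k$. For each $j$,
\[
x_1^{t_j}+\cdots+x_w^{t_j}=\sum_{l=1}^k n_l\, y_l^{t_j}=\sum_{l:\,n_l\text{ odd}} y_l^{t_j},
\]
since in $\F_2$ an even coefficient kills its term. So the system is satisfied iff the odd-multiplicity values alone satisfy it. When $v\in S(w)$ is the partition of the multiplicities, this means the odd-multiplicity values must correspond to the odd subtuple $\theta(v)$, and they must form a codeword-defining set (possibly including $0$, which contributes $0$ to every equation); the even-multiplicity values, corresponding to $\varepsilon(v)$, are unconstrained beyond being distinct from each other and from the odd ones.

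\textbf{Step 2 (counting valid multisets of a given type).} Fix $v\in S(w)$. I would count ordered tuples with partition type $v$ in four independent factors:
\begin{enumerate}[(a)]
\item Choose the unordered set of $\#\theta(v)$ distinct values in $\F_q$ (zero allowed) satisfying the system. If $0$ is not among them, this is exactly a weight-$\#\theta(v)$ codeword support, giving $A_{\#\theta(v)}$ choices; if $0$ is among them, the remaining $\#\theta(v)-1$ non-zero values form a weight-$(\#\theta(v)-1)$ codeword support, giving $A_{\#\theta(v)-1}$ choices. Total: $A_{\#\theta(v)}+A_{\#\theta(v)-1}$ (with the convention $A_{-1}=0$).
\item Distribute the odd multiplicities from $\theta(v)$ among these $\#\theta(v)$ chosen values: $\#\theta(v)!/P(\theta(v))!$ ways, dividing out the symmetry of equal parts in $\theta(v)$.
\item Choose the $\#\varepsilon(v)$ even-multiplicity values from $\F_q$ minus the odd ones: $\binom{q-\#\theta(v)}{\#\varepsilon(v)}$ ways.
\item Distribute the even multiplicities from $\varepsilon(v)$ among these: $\#\varepsilon(v)!/P(\varepsilon(v))!$ ways.
\end{enumerate}

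\textbf{Step 3 (from multisets to ordered tuples and summation).} Each valid multiset of type $v=(a_1,\ldots,a_k)$ corresponds to $\binom{w}{a_1,\ldots,a_k}=\binom{w}{v}$ ordered $w$-tuples by placing the repetitions in the $w$ coordinates. Since every solution in $\F_q^w$ has a well-defined partition type in $S(w)$, summing the product of the four factors above times $\binom{w}{v}$ over all $v\in S(w)$ produces exactly $N_w$, yielding the formula in \eqref{eq teo relacao}.

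\textbf{Main obstacle.} The substantive content is genuinely the characteristic-$2$ collapse that converts the defining system to a constraint on the odd-multiplicity sub-tuple; after that, the argument is a careful combinatorial accounting. The delicate bookkeeping point — and where I would spend the most care — is the split $A_{\#\theta(v)}+A_{\#\theta(v)-1}$ capturing whether or not $0$ appears as an odd-multiplicity value, together with the symmetry-quotient factors $\#\theta(v)!/P(\theta(v))!$ and $\#\varepsilon(v)!/P(\varepsilon(v))!$ which are easy to miscount when some parts of $v$ coincide. Verifying the edge cases (e.g.\ $\#\theta(v)=0$ or $1$) is a useful sanity check to confirm the formula is clean.
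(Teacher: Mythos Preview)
Your proposal is correct and follows essentially the same route as the paper: partition solutions by their multiplicity type $v\in S(w)$, use the characteristic-$2$ collapse so that only odd-multiplicity values are constrained, split according to whether $0$ occurs among them to get the $A_{\#\theta(v)}+A_{\#\theta(v)-1}$ term, and finish with the same multinomial bookkeeping. The paper's proof is organized in the same order with the same factors; your added remarks on edge cases and the $A_{-1}=0$ convention are consistent with how the paper uses the formula afterwards.
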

\begin{proof}

    Let $N_w$ be the set of solutions to (\ref{N_w definicao}) over $\F_q$, with no restrictions. We will first divide this set into the union of subsets $N_w(v)$, where each subset contains solutions that are tuples of length $w$ with a partition tuple equal to $v$, for each partition $v$ with sum $w$. We will show how to relate the values of $N_w(v)$ to the weight distributions $A_k$ where $k \le w$.

    Let us fix a particular partition $v \in S(w)$ and generate every solution that has this partition. We will determine what values can be put into the coordinates to form a solution, how many repetitions they have, and in how many ways they can be arranged.

    Let $\theta(v)$ and $\varepsilon(v)$ be the ordered subtuples containing the odd and even values in $v$, respectively. The values with even repetition vanish in (\ref{N_w definicao}), while the values with odd repetition reduce to only one summand in (\ref{N_w definicao}) and thus must be solutions to
    \begin{equation}\label{N_w impares}
    \begin{cases}
        y_1^{t_1} + \cdots + y_{\#\theta(v)}^{t_1} = 0,\\
        \vdots\\
        y_1^{t_s} + \cdots + y_{\#\theta(v)}^{t_s} = 0,\\
    \end{cases}
    \end{equation}
    with the added restriction that they must be distinct. There are then two possibilities: if one of the values is $0$, the remaining $\#\theta(v) - 1$ values correspond to codewords with weight $\#\theta(v) - 1$, giving us $A_{\#\theta(v) - 1}$ choices; if none of the values is $0$, then there are $A_{\#\theta(v)}$ choices for the values. Thus, the total number of ordered tuples of values that are solutions to (\ref{N_w impares}) is $(A_{\#\theta(v) - 1} + A_{\#\theta(v)})$. Next, we need to choose how many repetitions each value has. We can select one of the odd repetition values in $\#\theta(v)$ for each ordered tuple of values with odd repetition, which can be done in $\#\theta(v)!/ P(\theta(v))!$ ways.

    The values with even repetition vanish in (\ref{N_w definicao}) so for any choice of values with odd repetition the only restriction is that we cannot choose values that have already been chosen. Thus, we have 
    $$\binom{q - \#\theta(v)}{\#\varepsilon(v)}$$
    tuples of values with even repetition. Similarly to the values with odd repetition, the number of repetitions of the values with even repetition can be chosen $\#\varepsilon(v)!/ P(\varepsilon(v))!$ ways. 

    Finally, to form a solution tuple $(x_1, \dots, x_w)$ of (\ref{N_w definicao}), we just need to distribute these coordinates according to the partition, which can be done $\binom{w}{v}$ ways. 

    The total number of solutions with the partition $v$ is then obtained multiplying every independent choice, giving us
    $$N_w(v) = (A_{\#\theta(v)} + A_{\#\theta(v) - 1})  \frac{\#\theta(v)!}{P(\theta(v))!} \cdot  \binom{q - \#\theta(v) }{ \#\varepsilon(v)} \frac{\#\varepsilon(v)!}{P(\varepsilon(v))!}\cdot  \binom{w}{ v}.$$
    Summing it over every $w$-sum partition gives us the value expression (\ref{eq teo relacao}).
\end{proof}

% In the next section we will show an application where we determine the number of codewords with weight $3, 4$ and $5$ in a cyclic code such that the corresponding system (\ref{N_w definicao}) is such that $N_w$ can be computed.

In the next section we will see some applications of this result.

\section{Applications}
Theorem \ref{teo relacao N_w A_w} relates the number of solutions of systems of diagonal equations to the weight distribution of binary cyclic codes. In this final section, we apply it in both directions: first we use binary cyclic codes with known weight distributions to determine the number of solutions of systems of diagonal equations; and then, we use a family of systems of diagonal equations with known number of solutions to find the distribution of low-weight codewords in a family of binary cyclic codes.

Note that for any non-trivial cyclic code $C_{t_1, \dots, t_s}$ the values $A_0 = 1$ and $A_1 = 0$ are already known. Therefore, if we know the values for $N_w$ for every positive integer $w$, we can iterate from $w = 2$ to determine each $A_w$ using the values of $N_w$ and the values of $A_j$ for $j < w$. Likewise, if we have the weight distribution $A_0, A_1,\dots$ for a code, we can compute the value of any $N_w$. The most computationally demanding part is determining $P(w)$, which becomes more difficult as $w$ increases.

% Let us consider an example. Let us consider 
% $$q = 2^4 = 16, \quad u = 2^2 - 1 = 3, \quad v = 2^2 + 1 = 5.$$
% Theorem $4.5$ in \cite{survey} provides us with a weight distribution of the code of length $n = 15$ with parity check polynomial $h(x) = g_3(x) g_5(x)$:
% \begin{table}[]
%     \centering
%     \begin{tabular}{|c|c|}
%     \hline
%         Weight & Frequency  \\
%          \hline
%      $0$ &$1$ \\
%      $11$ & $45$\\
%      $12$& $15$ \\
%      $15$ & $3$\\
%      \hline
%     \end{tabular}
%     \caption{Frequency distribution of $S(\alpha, \beta)$ when $q = 2^{2r}$ and $t = 2^r + 1$}
%     \label{tabela S alpha beta}
% \end{table}
% Checking the $2$-cyclomic classes mod 15 we have that the generator polynomial is $g_1(x) g_7(x)$.
% We should be able to relate this to the number of solutions of the systems

% \textcolor{red}{não é mais fácil pegar um código bch?}

% %As another example, a BCH code of designed minimum distance $\delta$ is cyclic code with generator polynomial $g(x) = g_{b}(x) g_{b + 1}(x) \cdots g_{b + \delta - 2}(x)$ where $b$ is a positive integer and $b, b+1 ,\dots, b + \delta - 2$ are not in the same cyclotomic class (see Theorem 5.1.1 in \cite{error correcting codes}). 
For instance, a BCH code of designed distance $\delta$ is a cyclic code with generator polynomial $g(x) = \mbox{lcm}(g_{b}(x), g_{b + 1}(x), \dots, g_{b + \delta - 2}(x))$ where $b$ is a positive integer. A classic result about such BCH codes is that their minimum distance $d$ is greater or equal to $\delta$ \cite[Theorem 5.1.1]{error correcting codes}. 

Let $b$ and $\delta$ be parameters for a binary BCH code, $0 < w < d$ an odd integer and $N_w$ be the number of solutions of the system
$$\begin{cases}
    x_1^{b} + \cdots + x_w^{b} = 0,\\
    x_1^{b + 1} + \cdots + x_w^{b + 1} = 0,\\
    \vdots\\
    x_1^{b + \delta - 2} + \cdots + x_w^{b + \delta - 2} = 0.\\
\end{cases}$$
Since $\#\theta(v) \le w$ for every partition and $A_w = 0$ for $0< w < d$, the term $(A_{\#\theta(v)}+ A_{\#\theta(v) - 1})$ will vanish whenever $\#\theta(v) \notin \{0, 1\}$, significantly reducing the number of terms in (\ref{eq teo relacao}). When $w$ is odd, there are no partitions of $w$ without odd repetitions, so every summand that does not vanish must have $\#\theta(v) = 1$, further reducing the number of terms. In this case, every summand will be of the form
$$\binom{q - 1 }{ \#\varepsilon(v)} \frac{\#\varepsilon(v)!}{P(\varepsilon(v))!} \cdot \binom{w }{ v }.$$

An example of a binary BCH code is the code with length $n = 15$, designed distance $\delta = 5$, and generator polynomial $g(x) = \mbox{lcm}(g_1(x), g_2(x), g_3(x), g_4(x)) = g_1(x) g_3(x)$. Using the general example in this instance we will determine the number $N_3$ of solutions to the system
$$\begin{cases}
    x_1 + x_2 + x_3 = 0,\\
    x_1^2 + x_2^2 + x_3^2 = 0,\\
    x_1^3 + x_2^3 + x_3^3 = 0,\\
    x_1^4 + x_2^4 + x_3^4 = 0.\\
\end{cases}$$
The partition $S(3) = \{(1,1,1), (1,2), (3)\}$ is such that the only tuples that have $\#\theta(v) = 1$ are $(1,2)$ and $(3)$, giving us only two terms we will need to sum.
\begin{align*}
    N_3 =&\, \binom{15 }{ 1} \frac{1!}{1!} \cdot \binom{3 }{ 1, 2} + \binom{15 }{ 0} \binom{3 }{ 3}\\
    =&\, 46.
\end{align*}

\hfill \newline

Let us use our main result the other way around, i.e., describe an example where the values for $N_w$ are known and use that to determine the weight distribution for low-weight codewords over $\F_q$. For this purpose we will cite the following theorem that gives us the number of solutions for a family of systems of diagonal equations and then compute the distribution of some weights in the corresponding families of cyclic codes.

\begin{theorem}\label{teorema do exemplo}
    Let $w > 1$, $k$ and $f$ be positive integers with $\gcd(k, 2f + 1) = 1$. Let $\F_q$ be the finite field with $q = 2^{2f + 1}$ elements. Then the number $N_w$ of solutions of the system
    \begin{equation}
    \begin{cases}
        x_1 + \cdots + x_w = 0,\\
        x_1^{2^k + 1} + \cdots + x_w^{2^k + 1} = 0,
    \end{cases}        
    \end{equation}
    over $\F_q$ is given by
    \begin{equation*}
        N_w = \begin{cases}
            q^{w - 2} + (q - 1) 2^{(w - 2)(f+1)}, &\text{ if } w \text{ is even},\\
            q^{w - 2} + (q - 1) 2^{(w - 3)(f + 1) + 1}, &\text{ if } w \text{ is odd}.
        \end{cases}
    \end{equation*}
\end{theorem}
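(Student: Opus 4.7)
The plan is to evaluate $N_w$ by additive character sums on $\F_q = \F_{2^{2f+1}}$. Let $\chi(x) = (-1)^{\Tr(x)}$ be the canonical additive character, where $\Tr$ is the trace $\F_q \to \F_2$. Using orthogonality to detect each of the two defining equations gives
\[
N_w = \frac{1}{q^2}\sum_{a,b \in \F_q} T(a,b)^w, \qquad T(a,b) := \sum_{x \in \F_q}\chi\bigl(ax + bx^{2^k+1}\bigr).
\]
The contribution from $b = 0$ is $q^{w-2}$ (only $a = 0$ survives). For $b \neq 0$, the hypothesis $\gcd(k, 2f+1) = 1$ together with $2f+1$ being odd forces $\gcd(2^k+1, q-1) = 1$, so $x \mapsto x^{2^k+1}$ is a bijection on $\F_q$. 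If $c \in \F_q^*$ is the unique $(2^k+1)$-th root of $b$, then substituting $x = y/c$ gives $T(a,b) = T(a/c,1)$. Consequently, for each $b \neq 0$ the inner sum $\sum_a T(a,b)^w$ equals $\sum_a T(a,1)^w$, and I obtain
\[
N_w = q^{w-2} + \frac{q-1}{q^2}\sum_{a \in \F_q} T(a,1)^w.
\]

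The key ingredient is the Walsh spectrum of the Gold power function $x \mapsto x^{2^k+1}$: when $m := 2f+1$ is odd and $\gcd(k,m) = 1$, every value $T(a,1)$ lies in $\{0,\pm 2^{f+1}\}$. Taking this classical fact as known, let $n_+,n_-,n_0$ count the $a \in \F_q$ with $T(a,1)$ equal to $2^{f+1}$, $-2^{f+1}$, $0$ respectively. The first two power moments are computed directly:
\[
\sum_{a} T(a,1) = \sum_{x}\chi(x^{2^k+1})\sum_{a}\chi(ax) = q,
\]
\[
\sum_{a} T(a,1)^2 = \sum_{x,y}\chi(x^{2^k+1}+y^{2^k+1})\sum_{a}\chi(a(x+y)) = q^2,
\]
where the inner $a$-sum forces $x = y$ in characteristic $2$. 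These two relations yield $n_+ - n_- = 2^f$ and $n_+ + n_- = 2^{2f}$.

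Finally, since $\sum_a T(a,1)^w = n_+ (2^{f+1})^w + n_-(-2^{f+1})^w$, this equals $(n_+ + n_-)\,2^{(f+1)w} = 2^{(f+1)w+2f}$ when $w$ is even and $(n_+ - n_-)\,2^{(f+1)w} = 2^{(f+1)w+f}$ when $w$ is odd. Substituting into the displayed formula for $N_w$, using $q = 2^{2f+1}$, and collecting the exponents as $(f+1)(w-2)$ and $(f+1)(w-3)+1$ respectively yields the two stated expressions. The main obstacle is importing the Gold Walsh spectrum; the standard derivation expresses $T(a,1)^2$ via a linearized polynomial of the form $x^{2^{2k}}+x$, whose only $\F_q$-root is $x = 0$ once $\gcd(2k,m) = \gcd(k,m) = 1$ (using that $m$ is odd), whence $T(a,1)^2 \in \{0, 2^{m+1}\}$. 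All remaining work is bookkeeping with powers of two.
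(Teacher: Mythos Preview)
Your argument is correct. The character-sum set-up, the reduction from $T(a,b)$ to $T(a,1)$ via the bijectivity of $x\mapsto x^{2^k+1}$ (which indeed follows from $\gcd(2k,2f+1)=\gcd(k,2f+1)=1$), the two moment computations, and the final exponent bookkeeping all check out. The sketch you give for the three-valued Walsh spectrum is also sound: writing $T(a,1)^2$ as a double sum and substituting $y=x+u$ leads to the linearized condition $u^{2^{2k}}+u=0$, whose only solutions in $\F_q$ are $u\in\F_2$, giving $T(a,1)^2\in\{0,2q\}=\{0,2^{m+1}\}$.

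As for comparison with the paper: there is nothing to compare. The paper does not prove this theorem; its entire proof is the single line ``See \cite[Theorem 3.7]{artigo do exemplo}'', citing Cao--Chou--Gu. Your write-up therefore supplies a genuine, self-contained proof where the paper only imports the result. The route you take---orthogonality of additive characters plus the classical Gold Walsh spectrum---is exactly the standard one and is almost certainly what the cited reference does as well, so in spirit there is no methodological divergence, only a difference in what is made explicit.
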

\begin{proof}
    See \cite[Theorem 3.7]{artigo do exemplo}.
\end{proof}

We use this to prove the following result:

\begin{theorem}\label{principal cap final}
    Let $k$ and $f$ be positive integers with $\gcd(k, 2f + 1) = 1$. Let $q  = 2^{2f + 1}$ and $C_{1,2^k + 1}$ be the binary cyclic code of length $n = q - 1$. Then
    \begin{align*}
        A_3 &= 0,\\
        A_4 &= 0,\\
        %A_5 &=\frac{q^3 - 5q^2 + (15 - 3\cdot 2^{2(f+1)})(q - 1) + 5q - 1}{120}.
        A_5&=\frac{(q-1)(q-2)(q-8)}{120},\\
        A_6&=\frac{(q - 1) (q - 2) (q - 6) (q - 8)}{720}.
    \end{align*}
\end{theorem}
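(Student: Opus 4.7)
The plan is to apply Theorem \ref{teo relacao N_w A_w} iteratively for $w = 3, 4, 5, 6$, combining it with the explicit formula for $N_w$ from Theorem \ref{teorema do exemplo} so that each resulting equation is linear in the single unknown $A_w$. As initial data, $A_0 = 1$ is trivial, $A_1 = 0$ holds for any non-trivial cyclic code, and $A_2 = 0$ follows from Theorem \ref{peso 2} since over $\F_2$ we have $D(1, 2^k + 1) = \gcd(q - 1,\, 1,\, 2^k + 1) = 1$.

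The key observation enabling the recursion is that within $S(w)$ the partition $v = (1, 1, \ldots, 1)$ of length $w$ is the unique element with $\#\theta(v) = w$, and its contribution to the sum in Theorem \ref{teo relacao N_w A_w} simplifies to $w!\,(A_w + A_{w-1})$. Every other partition $v \in S(w)$ contributes a factor $(A_{\#\theta(v)} + A_{\#\theta(v) - 1})$ with $\#\theta(v) < w$, multiplied by an explicit polynomial in $q$; moreover, $\#\theta(v) \equiv w \pmod 2$ forces only partitions of the correct parity to appear. Hence once $A_0, \dots, A_{w-1}$ are known, $A_w$ is linearly determined.

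For $w = 3$ I would enumerate $S(3) = \{(1,1,1), (1,2), (3)\}$: the first partition produces $6 A_3$, the other two (using $A_0 + A_1 = 1$) combine into a polynomial in $q$, and equating the total with $N_3 = 3q - 2$ gives $A_3 = 0$. The case $w = 4$ is analogous: the partitions $(1,1,2)$ and $(1,3)$ vanish because $\#\theta(v) = 2$ forces the factor $A_2 + A_1 = 0$, so only $(1,1,1,1), (2,2), (4)$ survive and yield $A_4 = 0$. With $A_3 = A_4 = 0$ in hand, the iterations for $w = 5$ and $w = 6$ simplify drastically, leaving only partitions with $\#\theta(v) \in \{1, 5\}$ (respectively $\{0, 6\}$) --- namely $(1,2,2), (1,4), (2,3), (5), (1,1,1,1,1)$ for $w = 5$, and $(2,2,2), (2,4), (6), (1,1,1,1,1,1)$ for $w = 6$, adopting the convention $A_{-1} = 0$ so that the $\#\theta(v) = 0$ factor reads $A_0 + A_{-1} = 1$.

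The main obstacle is purely algebraic: after substituting the combinatorial factors $\binom{q - \#\theta(v)}{\#\varepsilon(v)}$, $\binom{w}{v}$, $\#\theta(v)!/P(\theta(v))!$ and $\#\varepsilon(v)!/P(\varepsilon(v))!$, and simplifying Theorem \ref{teorema do exemplo}'s expressions using $q = 2^{2f+1}$ to rewrite $2^{(w-2)(f+1)}$ and $2^{(w-3)(f+1)+1}$ as explicit powers of $q$, one must verify the polynomial identities $120 A_5 = (q-1)(q-2)(q-8)$ and $720 A_6 = (q-1)(q-2)(q-6)(q-8)$. This is a finite polynomial comparison and requires no new ideas beyond the two invoked theorems.
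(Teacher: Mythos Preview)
Your proposal is correct and follows essentially the same approach as the paper: iterate Theorem \ref{teo relacao N_w A_w} for $w=3,4,5,6$, use Theorem \ref{teorema do exemplo} (with the simplification $2^{2(f+1)}=2q$) for the values of $N_w$, and feed in the initial data $A_0=1$, $A_1=A_2=0$, $A_{-1}=0$. The only cosmetic difference is that you discard in advance the partitions whose factor $A_{\#\theta(v)}+A_{\#\theta(v)-1}$ vanishes, whereas the paper writes out all terms of each $S(w)$ before observing the cancellations; the computations and the invoked results are identical.
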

\begin{proof}
    As discussed before, any binary cyclic code that is not the entire vector space has $A_0 = 1$ and $A_1 = 0$. Theorem \ref{peso 2} implies that $A_2 = 0$, and by our convention $A_{-1} = 0$. Thus, we will only compute the frequencies of weights equal to or greater than $3$.

    Using Theorem \ref{teorema do exemplo}, we have that
    \begin{equation}\label{eq N_w especifico}
        \begin{split}
            N_3 &= 3q - 2,\\
            N_4 &= q^2 + 2^{2(f+1)}(q - 1),\\
            N_5 &= q^3 + 2^{2(f+1) + 1}(q - 1).\\
            N_6 &= q^{4} + (q - 1) (2^{2(f+1)})^2
        \end{split}
    \end{equation}
    Notice that since $q = 2^{2f+1}$, then $2^{2(f+1)} = 2q$, thus
    \begin{equation}
        \begin{split}
            N_3 &= 3q - 2,\\
            N_4 &= 3q^2 - 2q,\\
            N_5 &= q^3 + 4q^2 - 4q.\\
            %N_6 &= q^{4} + (q - 1) 4q^2
            N_6 &= q^{4} + 4q^3 - 4 q^{2} 
        \end{split}
    \end{equation}
    
    We will use Theorem \ref{teo relacao N_w A_w} for the recursive relation. For $w = 3$, we have
$$S(3) = \{ (1,1,1), (1,2), (3)\},$$
and thus,
\begin{align*}
    N_3 =& \,(A_3 + A_2) \frac{3!}{3!} \binom{q - 3}{0} \frac{0!}{0!}  \binom{3}{1, 1, 1}  \\&+  (A_1 + A_0) \frac{1!}{1!} \binom{q - 1}{1} \frac{1!}{1!} \binom{3}{1, 2}  \\&+  (A_1 + A_0) \frac{1!}{1!} \binom{q}{0} \frac{0!}{0!} \binom{3}{3}\\
    =& \, 3! A_3 + 3 (q - 1) + 1.
\end{align*}
Consequently, 
$$A_3 = \frac{N_3 - 3(q- 1) - 1}{3!}.$$
Thus, the value of $N_3$ is tied to the value of $A_3$.
Using \eqref{eq N_w especifico}, we obtain $A_3 = 0.$

For $w = 4$, we have
$$S(4) = \{ (1,1,1,1), (1,1,2), (1,3), (2,2), (4)\}.$$
Hence,
\begin{align*}
    N_4 = & \, (A_4 + A_3) \frac{4!}{4!} \binom{4}{1,1,1,1}\\
    &+ (A_2 + A_1) \frac{2!}{2!} \binom{q - 2}{1} \frac{1!}{1!} \binom{4}{1,1,2}\\
    &+ (A_2 + A_1) \frac{2!}{1!1!} \binom{4}{1,3}\\
    &+ \binom{q}{2} \frac{2!}{2!} \binom{4}{2, 2}\\
    &+ \binom{q}{1} \frac{1!}{1!} \binom{4}{4}\\
    =&\, 4!(A_4 + A_3) +  3 q (q-1) + q,
\end{align*}
and thus
$$A_4 = \frac{N_4 -  3 q (q-1) - q}{4!} - A_3.$$
Using \eqref{eq N_w especifico}, we get
$$A_4 = 0.$$

For $w = 5$, we have
$$S(5) = \{ (1,1,1,1,1), (1,1,1,2), (1,1,3), (1,4) (1,2,2), (2,3), (5)\},$$
hence,
\begin{align*}
    N_5 = & \, (A_5 + A_4) \frac{5!}{5!} \binom{5}{1,1,1,1,1}+ (A_3 + A_2) \frac{3!}{3!} \binom{q - 3}{1} \frac{1!}{1!} \binom{5}{1,1,1,2}\\
    &+ (A_3 + A_2) \frac{3!}{1!2!} \binom{5}{1,1,3} + (A_1 + A_0) \frac{1!}{1!}\binom{q - 1}{1} \frac{1!}{1!} \binom{5}{1, 4}\\
    &+ (A_1 + A_0) \frac{1!}{1!}\binom{q - 1}{2} \frac{2!}{2!} \binom{5}{1,2, 2}+ (A_1 + A_0) \frac{1!}{1!} \binom{q - 1}{ 1} \frac{1!}{1!}\binom{5}{2,3}\\
    &+ (A_1 + A_0) \frac{1!}{1!}\binom{5}{5}\\
    =&\, 5!(A_5 + A_4) + 60 (q-3) A_3 + 60 A_3 + 5(q-1) + 15(q-1)(q-2) + 10(q-1) + 1\\
    =&\, 5!(A_5 + A_4) + 60(q - 2)A_3 + 15 (q - 1)^2 + 1.
\end{align*}
Thus,
$$A_5 = \frac{N_5 - 60(q - 2)A_3 - 15 (q - 1)^2 - 1 }{120} - A_4,$$
and \eqref{eq N_w especifico} implies that
$$A_5 = \frac{(q - 1) (q - 2) (q - 8)}{120}.$$

For $w = 6$, we have
{\small $$S(6) = \{ (1,1,1,1,1,1), (1,1,1,1,2), (1,1,1,3), (1,1,4) (1,1,2,2), (1,2,3), (1,5), (2,2,2), (2,4), (3,3), (6)\},$$}
hence,
\begin{align*}
    N_6 = &\,(A_6 + A_5) \frac{6!}{6!} \binom{6}{1,1,1,1,1,1}+ (A_4 + A_3) \frac{4!}{4!} \binom{q - 4}{1} \frac{1!}{1!} \binom{6}{1,1,1,1,2}\\
    &+ (A_4 + A_3) \frac{4!}{1!3!} \binom{6}{1,1,1,3} + (A_2 + A_1) \frac{2!}{2!}\binom{q - 2}{1} \frac{1!}{1!} \binom{6}{1,1, 4}\\
    &+ (A_2 + A_1) \frac{2!}{2!}\binom{q - 2}{2} \frac{2!}{2!} \binom{6}{1,1,2, 2}+ (A_2 + A_1) \frac{1!}{1!} \binom{q - 2}{ 1} \frac{2!}{1!1!}\binom{6}{1,2,3}\\
    &+ (A_2 + A_1) \frac{2!}{1!1!}\binom{6}{1,5} + \binom{q}{3} \frac{3!}{3!}\binom{6}{2,2,2}\\
    &+  \binom{q}{2}\frac{2!}{1!1!}\binom{6}{2,4} +  (A_{2} + A_1) \frac{2!}{2!}\binom{6}{3,3} +  \binom{q}{1}\frac{1!}{1!}\binom{6}{6}\\
    =& 6!(A_6 + A_5)  + 90\binom{q}{3} + 30 \binom{q}{2} + q
\end{align*}
Thus,
$$A_6 = \frac{N_6 - 15q (q-1)(q-2) - 15 q(q-1) - q}{720} - A_5,$$
and \eqref{eq N_w especifico} implies that
\begin{align*}
    A_6 &= \frac{q^{4} + 4q^3 - 4 q^{2}  - 15q (q-1)(q-2) - 15 q(q-1) - q}{720} - \frac{(q - 1) (q - 2) (q - 8)}{120}\\
&=\frac{(q - 1) (q - 2) (q - 6) (q - 8)}{720}
\end{align*}
\end{proof}

Notably, this proves that the minimum distance for codes in this family is equal to or greater than $5$. For a concrete example, let $f = 2$, $k 
= 2$, and $q = 32$. The corresponding code is the cyclic code $C_{1,2^2 + 1}$ with length $n = 31$, which according to Theorem \ref{principal cap final} has a weight distribution that starts like this:
\begin{equation*}
        A_0 = 1, \quad
        A_1 = 0, \quad
        A_2 = 0, \quad
        A_3 = 0, \quad
        A_4 = 0, \quad
        A_5 = 186, \quad A_6 = 806.
\end{equation*}

% We remark that the code $C_{1,t}$ is the dual of the code with parity check polynomial $g_1(x) g_t(x)$, which is a Hamming code. The weight distribution of this dual code was determined in \cite{BCH}, and a summary can be found in Theorem 4.1 and Table 2 of \cite{survey}. Using this distributions we can also use the MacWilliams equations (see section 7.1 in \cite{error correcting codes}) to determine the weight distribution of $C_{1,t}$. We remark that using MacWilliams equations is also computationally intensive, since the equations are recursive and become more complicated as $w$ grows. 

% Let us conclude this chapter with some examples. Let $q = 2^4 = 16$, and $t = 2^2 + 1 = 5$, and let us consider the binary cyclic code $C_{1,5}$ with length $n = 15$. This is a code with minimum distance $d = 3$, and the weight distribution of the code is, using the formulas in Theorem \ref{principal cap final} we have 
% \begin{equation*}
%     A_3 = 5,\quad
%     A_4 = 15,\quad
%     A_5 = 60.
% \end{equation*}

% For another example let
% $$q = 2^{8} = 1024, \quad t = 2^{4} + 1 = 17,$$
% and consider the binary cyclic code $C_{1,17}$ with length $n = 1023$. We have $A_2 = 0$, and using the formulas in Theorem \ref{principal cap final} we get
% \begin{equation*}
%     A_3 = 5115,\quad
%     A_4 = 1304325,\quad
%     A_5 = 282290712.
% \end{equation*}

\section*{Acknowledgements}

%I'm very grateful to the anonymous referees for careful reading of the paper and valuable suggestions and comments.
The first author was supported in part by the Coordena\c{c}\~ao de Aperfei\c{c}oamento de Pessoal de N\'ivel Superior-Brazil (CAPES) - Finance Code 001. The second author was partially supported by CNPq Grant 316843/2023-7 Brazil.


\begin{thebibliography}{9}


\bibitem{survey}
Cunsheng Ding, Chunlei Li, Nian Li, Zhengchun Zhou.
\textit{Three-weight cyclic codes and their weight distributions.}
Discrete Mathematics, Volume 339, Issue 2, 415-427, 2016.

\bibitem{survey 2}
Hai Q. Dinh, Chengju Li, Qin Yue.
\textit{Recent progress on weight distributions of cyclic codes
over finite fields.}
Journal of Algebra Combinatorics Discrete Structures and Applications, 2(1), Pages 39-63, 2014.

\bibitem{artigo do exemplo}
Xiwang Cao, Wun-Seng Chou, Jingjing Gu.
\textit{On the number of solutions of certain diagonal equations over finite fields.} Finite Fields and Their Applications, vol. 42, pgs. 225-252, 2016.

\bibitem{charpin}
P. Charpin, A. Tietavainen, Z. Zinoviev.
\textit{On binary cyclic codes with minimum distance} $d = 3$. Probl. Peredachi Inf, vol. 33(4), Pages 287-296, 1997.



\bibitem{error correcting codes}
Huffman WC, Pless V.
\textit{Fundamentals of Error-Correcting Codes.}
Cambridge University Press, (2003).




\bibitem{kloosterman moisio}
Marko Moisio, Kalle Ranto.
\textit{Kloosterman sum identities and low-weight codewords in a cyclic code with two zeros.}
Finite Fields and Their Applications, Vol. 13, Pgs. 922–935, 2007. 

\bibitem{lidl}
Rudolf Lidl, Harald Niederreiter. \textit{Finite Fields.} Encyclopedia of Mathematics and its Applications, Cambridge University Press, 2nd ed., (1997).



\end{thebibliography}
\end{document}